\newtheorem{theorem}{Theorem}[section]
\newtheorem{lemma}[theorem]{Lemma}
\newtheorem{corollary}[theorem]{Corollary}
\theoremstyle{definition}
\theoremstyle{remark}
\numberwithin{equation}{theorem}
\DeclareMathOperator{\Gal}{Gal}
\DeclareMathOperator{\im}{im}
\DeclareMathOperator{\cores}{cores}
\DeclareMathOperator{\res}{res}
\DeclareMathOperator{\point}{point}
\DeclareMathOperator{\vcd}{vcd}
\DeclareMathOperator{\cd}{cd}
\DeclareMathOperator{\red}{red}
\begin{document}

\title[The Hasse Principle and Zero Cycles of Degree One]{Implications of the Hasse Principle for Zero Cycles of Degree One on Principal Homogeneous Spaces}

\author{Jodi Black}

\address{Department of Mathematics and Computer Science, Emory University, Atlanta, Georgia, 30322}

\email{jablack@emory.edu}

\thanks{The results in this work are from a doctoral dissertation in progress under the direction of R. Parimala whom we sincerely thank for her guidance.\\
MSC 2010 Primary Classification: 11E72\\ MSC 2010 Secondary Classification:11E57}

\date{September 30, 2010}

\begin{abstract}
Let $k$ be a perfect field of virtual cohomological dimension $\leq 2$. Let $G$ be a connected linear algebraic group over $k$ such that $G^{sc}$ satisfies a Hasse principle over $k$. Let $X$ be a principal homogeneous space under $G$ over $k$. We show that if $X$ admits a zero cycle of degree one, then $X$ has a $k$-rational point.
\end{abstract}

\maketitle

\section*{Introduction}

The following question of Serre \cite[pg 192 ]{SerreGC} is open in general. 

\bigskip

\begin{description}
\item[Q] Let $k$ be a field and $G$ a connected linear algebraic group defined over $k$. Let $X$ be a principal homogeneous space under $G$ over $k$. Suppose $X$ admits a zero cycle of degree one, does $X$ have a $k$-rational point?
\end{description}

\bigskip

Let $k$ be a number field, let $V$ be the set of places of $k$ and let $k_v$ denote the completion of $k$ at a place $v$. We say that a connected linear algebraic group $G$ defined over $k$ satisfies a \emph{Hasse principle} over $k$ if the map $H^1(k,G) \to \prod_{v \in V} H^1(k_v,G)$ is injective. Let $V_r$ denote the set of real places of $k$. If $G$ is simply connected, then by a theorem of Kneser, the Hasse principle reduces to injectivity of the maps $H^1(k,G) \to \prod_{v \in V_r} H^1(k_v,G)$. That this result holds is a theorem due to Kneser, Harder and Chernousov \cite{ChernousovHasse} ,\cite{HarderHasse},\cite{KneserHasse}. Sansuc used this Hasse principle to show that {\bf Q} has positive answer for number fields. 


Let $k$ be any field and $\Omega$ the set of orderings of $k$. For $v \in \Omega$ let $k_v$ denote the real closure of $k$ at $v$. We say that a connected linear algebraic group $G$ defined over $k$ satisfies a \emph{Hasse principle} over $k$ if the map $H^1(k,G) \to \prod_{v \in \Omega} H^1(k_v,G)$ is injective. It is a conjecture of Colliot-Th\'el\`ene \cite[pg 652]{BayerPariHasse} that a simply connected semisimple group satisfies a Hasse principle over a perfect field of virtual cohomological dimension $\leq 2$. Bayer and Parimala \cite{BayerPariHasse} have given a proof in the case where $G$ is of classical type, type $F_4$ and type $G_2$.

Our goal in this paper is to extend Sansuc's result by providing a positive answer to {\bf Q} when $k$ is a perfect field of virtual cohomological dimension $\leq 2$ and $G^{sc}$ satisfies a Hasse principle over $k$. More precisely, we prove the following:

\begin{theorem}
Let $k$ be a perfect field of virtual cohomological dimension $\leq 2$. Let $\{ L_i\}_{1 \leq i \leq m}$ be a set of finite field extensions of k such that the greatest common divisor of the degrees of the extensions $[L_i:k]$  is 1. Let $G$ be a connected linear algebraic group over $k$. If $G^{sc}$ satisfies a Hasse principle over $k$, then the canonical map
\[
H^1(k,G) \rightarrow \displaystyle\prod_{i=1}^m H^1(L_i,G)
\]
has trivial kernel.
\end{theorem}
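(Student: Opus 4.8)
The plan is to take a class $\alpha$ in the kernel, so that $\res_{L_i/k}(\alpha)=0$ for every $i$, and to peel off the abelian and the simply connected layers of $G$ one at a time, using the coprimality of the degrees $[L_i:k]$ to kill the abelian contributions and the Hasse principle for $G^{sc}$ to dispose of the remaining simply connected part. First I would reduce to the case that $G$ is reductive: since $k$ is perfect, the unipotent radical $R_u(G)$ has vanishing Galois cohomology, so $G \to G/R_u(G)$ induces a bijection $H^1(k,G) \cong H^1(k, G/R_u(G))$ compatibly over every extension, and $G^{sc}$ is unchanged. Next I would fix a $z$-extension $1 \to Z \to H \to G \to 1$ with $Z$ a central quasi-trivial torus and $H^{\mathrm{der}}$ simply connected, so that $H^{\mathrm{der}} = G^{sc}$. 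As $Z$ is central, the sequence $H^1(k,Z) \to H^1(k,H) \to H^1(k,G) \xrightarrow{\partial} H^2(k,Z)$ is exact; Hilbert 90 gives $H^1(k,Z)=0$ (so $H^1(k,H)\to H^1(k,G)$ is injective) and identifies $H^2(k,Z)$ with a product of Brauer groups of finite extensions of $k$. The class $\partial(\alpha)$ lies in this abelian group and restricts to $0$ over each $L_i$, so $\cores_{L_i/k}\circ\res_{L_i/k}=[L_i:k]$ forces $[L_i:k]\,\partial(\alpha)=0$ for all $i$, whence $\partial(\alpha)=0$ because $\gcd_i [L_i:k]=1$. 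Thus $\alpha$ lifts to a unique $\beta \in H^1(k,H)$, and injectivity over each $L_i$ forces $\res_{L_i/k}(\beta)=0$.

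The second step uses $1 \to G^{sc} \to H \to T \to 1$, where $T = H/H^{\mathrm{der}}$ is a torus, together with $H^1(k,G^{sc}) \to H^1(k,H) \to H^1(k,T)$. The image of $\beta$ in the abelian group $H^1(k,T)$ dies over every $L_i$, so the same corestriction argument yields $0$ and produces $\gamma \in H^1(k,G^{sc})$ mapping to $\beta$; it then suffices to prove $\gamma = 0$, since this gives $\beta=0$ and hence $\alpha=0$. To this end I would invoke the Hasse principle for $G^{sc}$, reducing the problem to showing $\res_{k_v/k}(\gamma)=0$ for every ordering $v$ of $k$, with $k_v$ the real closure. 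Here the coprimality reenters: for fixed $v$ the étale algebra $L_i \otimes_k k_v$ is a product of copies of $k_v$ and $k_v(\sqrt{-1})$, say with $r_i$ real and $c_i$ complex factors, so $r_i + 2c_i = [L_i:k]$; were all the $r_i$ zero, every degree would be even, contradicting $\gcd_i[L_i:k]=1$. Hence some $L_{i_0}$ has a real place over $v$ with residue field $k_v$, and restricting $\res_{L_{i_0}/k}(\beta)=0$ along it gives $\res_{k_v/k}(\beta)=0$, that is $\res_{k_v/k}(\gamma) \in \ker\big(H^1(k_v,G^{sc}) \to H^1(k_v,H)\big)$ for every $v$.

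The remaining difficulty, and what I expect to be the main obstacle, is that $\res_{k_v/k}(\gamma)$ need not itself vanish: this kernel equals the image of the connecting map $\delta_v : T(k_v) \to H^1(k_v,G^{sc})$, which is genuinely nonzero in general already for $H = GL_1(D)$, $G^{sc}=SL_1(D)$ over a real closure, where the reduced norm is not surjective. To finish I would apply the Hasse principle not to $\gamma$ but to a corrected lift: the lifts of $\beta$ form a coset of $\im\big(\delta : T(k) \to H^1(k,G^{sc})\big)$, so I would use real approximation for the torus $T$ to find $t \in T(k)$ whose local components match the classes $\res_{k_v/k}(\gamma)=\delta_v(s_v)$ at all relevant orderings, and replace $\gamma$ by the lift it corrects to, making $\res_{k_v/k}(\gamma')=0$ for all $v$ while still mapping to $\beta$. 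The two points needing care are that only finitely many orderings contribute, so that the function $v \mapsto \res_{k_v/k}(\gamma)$ is constructible on the real spectrum of $k$ where the hypothesis $\vcd(k)\le 2$ should enter, and that real approximation for $T$ is available in this generality; granting these, the Hasse principle for $G^{sc}$ gives $\gamma'=0$, hence $\beta=0$ and $\alpha=0$. Reconciling the local lifts with a single global class in this last step is, I expect, the technical heart of the proof.
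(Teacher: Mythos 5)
Your opening reductions are sound and run parallel to the paper's: passing to $G^{\red}$, using a central extension with quasitrivial kernel plus restriction--corestriction and $\gcd([L_i:k])=1$ to kill the abelian obstructions, and noting that some $L_{i_0}$ has odd degree so that every ordering $v$ of $k$ extends to $L_{i_0}$, whence $\res_{k_v}(\beta)=0$ for every real closure (this is the paper's Lemma \ref{lemma1}). Working with a $z$-extension $1 \to Z \to H \to G \to 1$ instead of Sansuc's special covering $1 \to \mu \to G_0 \times S \to G^n \times T \to 1$ is a legitimate variant up to that point. The gap is in your final step, and it is not a technicality: it is exactly the obstruction this paper exists to circumvent.

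``Real approximation for $T$'' is not available over a general perfect field of virtual cohomological dimension $\leq 2$. The set $\Omega$ of orderings of such a field can be infinite (already for $\mathbb{R}(x,y)$), there is no Chebotarev-type density theorem, and your hope that only finitely many orderings contribute is unsupported --- the introduction of the paper states explicitly that even surjectivity of $H^1(k,\mu_2) \to \prod_{v\in\Omega}H^1(k_v,\mu_2)$ forces strong conditions on $k$ such as the SAP property, and that the whole point is to replace this arithmetic input from Sansuc's number-field argument by something else. There is also a structural problem specific to your decomposition: your local defects $\res_{k_v}(\gamma)$ live in the non-abelian pointed set $H^1(k_v,G^{sc})$, where there is no corestriction, and the connecting map $\delta_v : T(k_v) \to H^1(k_v,G^{sc})$ is not a homomorphism (its kernel $G^{sc}=H^{\mathrm{der}}$ is normal but not central in $H$), so you cannot manufacture the global correcting element $t$ by norms either. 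The paper's solution avoids approximation entirely by arranging the defect to lie in $H^1(\cdot,\mu)$ with $\mu$ finite \emph{central}, where corestriction and multiplicativity of $\delta$ both exist: it takes the lifts $\eta_{L_i}\in H^1(L_i,\mu)$ of $\lambda'$ over the fields $L_i$ --- data your argument never uses --- and forms the single global class $\prod_i N_{L_i/k}(\eta_{L_i})^{m_i}$ with $\sum_i m_i[L_i:k]=1$. The norm principle over real closed fields (Lemma \ref{NormPrinciple}: $N_{L/k}(\im \delta_L) \subset \im\delta$, because $k_v\otimes L$ is a product of copies of $k_v$ and of the algebraically closed field $k_v(\sqrt{-1})$) shows that at every ordering this global class agrees with the local lift $\alpha_v$ up to an element of $\im\delta$, which $f$ kills; hence its image under $f$ equals $\lambda'_v$ for all $v$, and the Hasse principle for $G_0$ identifies it with $\lambda'$ globally. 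The lesson is that one does not need a global class matching the local data on the nose --- only modulo $\im\delta$ --- and corestriction from the $L_i$ produces such a class with no approximation hypothesis on $k$.
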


We obtain the following as a corollary: 

\begin{corollary}
Let $k$ be a perfect field of virtual cohomological dimension $\leq 2$. Let $\{ L_i\}_{1 \leq i \leq m}$ be a set of finite field extensions of k such that the greatest common divisor of the degrees of the extensions $[L_i:k]$  is 1. Let $G$ be a connected linear algebraic group over $k$. If the simple factors of $G^{sc}$ are of classical type, type $F_4$ or type $G_2$ then the canonical map
\[
H^1(k,G) \rightarrow \displaystyle\prod_{i=1}^m H^1(L_i,G)
\]
is injective.
\end{corollary}

Sansuc's proof of a positive answer to {\bf Q} over number fields relies on the surjectivity of the map $H^1(k,\mu) \to \prod_{v \in V_r}H^1(k_v,\mu)$ for $\mu$ a finite commutative group scheme. This result is a consequence of the Chebotarev density theorem and does not extend to a general field of virtual cohomological dimension $\leq 2$. Even in the case $\mu = \mu_2$, the surjectivity of the map $H^1(k, \mu) \to \prod_{v \in \Omega} H^1(k_v, \mu)$ imposes severe conditions on $k$ like the SAP property. The main content of this paper is to replace the arithmetic in Sansuc's paper with a norm principle over a real closed field. 

\section{Algebraic Groups}

In this section, we review some well-known facts from the theory of algebraic groups and define some notation used in the remainder of the work. 

Let $k$ be a field. An \emph{algebraic group} $G$ over $k$ is a smooth group scheme of finite type. A surjective morphism of algebraic groups with finite kernel is called an \emph{isogeny} of algebraic groups.
An isogeny $ G_1 \to G_2$ is said to be \emph{central} if its kernel is a central subgroup of $G_1$.

An \emph{algebraic torus} is an algebraic group $T$ such that $T(\bar k)$ is isomorphic to a product of multiplicative groups $G_{m, \bar k}$. A torus $T$ is said to be \emph{quasitrivial} if it is a product of groups of the form $R_{E_i/k}G_m$ where $\{E_i\}_{1 \leq i \leq r}$ is a family of finite field extensions of $k$. 

An algebraic group $G$ is called \emph{linear} if it is isomorphic to a closed subgroup of $GL_n$ form some $n$, or equivalently, if its underlying algebraic variety is affine. Of particular interest among connected linear algebraic groups are semisimple groups and reductive groups. 

A connected linear algebraic group is called \emph{semisimple} if it has no nontrivial, connected, solvable, normal subgroups. A semisimple group $G$ is said to be \emph{simply connected} if every central isogeny $G^\prime \to G$ is an isomorphism. We can associate to any semisimple group a simply connected group $\tilde G$ (unique up to isomorphism) such that there is a central isogeny $\tilde G \to G$. We refer to $\tilde G$ as the \emph{simply connected cover} of $G$.

Any simply connected semisimple group is a product of simply connected simple algebraic groups \cite[Theorem 26.8]{KMRT}. Any simple algebraic group belongs to one of four infinite families $A_n$, $B_n$, $C_n$, $D_n$ or is of type $E_6,E_7,E_8,F_4$ or $G_2$ (see for example \cite[\S 26]{KMRT}). A simple group which is of type $A_n, B_n, C_n$ or $D_n$ but not of type trialitarian $D_4$ is said to be a \emph{classical group}. All other simple groups are called \emph{exceptional groups}.

A connected linear algebraic group is called  \emph{reductive} if it has no nontrivial, connected, unipotent, normal subgroups. Given a connected linear algebraic group $G$, the \emph{unipotent radical} of $G$ denoted $G^u$ is the maximal connected unipotent normal subgroup of $G$. It is clear that $G/G^u$ is always a reductive group. We denote $G/G^u$ by $G^{\red}$. The commutator subgroup of $G^{\red}$ is a semisimple group which we denote $G^{ss}$. We denote the simply connected cover of $G^{ss}$ by $G^{sc}$.

A \emph{special covering} of a reductive group $G$ is an isogeny
\[
1 \to \mu \to G_0 \times S \to G \to 1
\]

\noindent
where $G_0$ is a simply connected semisimple algebraic $k$-group and $S$ is a quasitrival $k$-torus. Given a reductive group $G$ there exists an integer $n$ and a quasitrival torus $T$ such that $G^n \times T$ admits a special covering \cite[Lemme 1.10]{Sansuc}.

\section{Galois Cohomology and Zero Cycles}

For our convenience, we will discuss {\bf Q} in the context of Galois Cohomology. We briefly review some of the notions from Galois Cohomology we will use and then restate {\bf Q} in this setting. 

Let $k$ be a field and $\Gamma_k = \Gal(\bar k/k)$ be the absolute Galois group of $k$. For an algebraic $k$-group $G$, let $H^i(k, G) = H^i(\Gamma_k, G(\bar k))$ denote the Galois Cohomology of $G$ with the assumption $i \leq 1$ if $G$ is not abelian. For any $k$-group $G$, $H^0(k, G)=G(k)$ and $H^1(k,G)$ is a pointed set which classifies the isomorphism classes of principal homogeneous spaces under $G$ over $k$. The point in $H^1(k,G)$ corresponds to the principal homogeneous space with rational point. We will interchangeably denote the point in $H^1(k,G)$ by \emph{point} or 1. 

Each $\Gamma_k$-homomorphism $f:G \to G^{\prime}$ induces a functorial map $H^i(k, G) \to H^i(k, G^{\prime})$ which we shall also denote by $f$. Given an exact sequence of $k$-groups,
\[
\xymatrix{
1 \ar[r] &G_1 \ar[r]^-{f_1} &G_2 \ar[r]^-{f_2} &G_3 \to 1
}
\]

\noindent
there exists a connecting map $\delta_0:G_3(k) \to H^1(k, G_1)$ such that the following is an exact sequence of pointed sets.
\[
\xymatrix{
G_1(k) \ar[r]^-{f_1} &G_2(k) \ar[r]^-{f_2} &G_3(k) \ar[r]^-{\delta_0} &H^1(k,G_1) \ar[r]^-{f_1} &H^1(k,G_2) \ar[r]^-{f_2} &H^1(k,G_3)}
\]

\noindent
If $G_1$ is central in $G_2$, there is in addition a connecting map $\delta_1:H^1(k, G_3) \to H^2(k, G_1)$ such that the following is an exact sequence of pointed sets. 
\[
\xymatrix{
G_3(k) \ar[r]^-{\delta_0} &H^1(k,G_1) \ar[r]^-{f_1} &H^1(k,G_2) \ar[r]^-{f_2} &H^1(k,G_3) \ar[r]^-{\delta_1} &H^2(k,G_1)
}
\]

Given a field extension $L$ of $k$, $\Gal(\bar k/ L) \subset \Gal(\bar k/k)$ and there is a restriction homomorphism $\res: H^1(k,G) \to H^1(L,G)$. If $G$ is a commutative group, and if the degree of $L$ over $k$ is finite, there is also a corestriction homomorphism $\cores:  H^1(L,G) \to H^1(k,G)$. The composition $\cores \circ \res$ is multiplication by the degree of $L$ over $k$.

Let $p$ be any prime number. The \emph{$p$-cohomological dimension} of $k$ is less than or equal to $r$  (written $\cd_p(k) \leq r$) if $H^n(k, A) =0$ for every $p$-primary torsion $\Gamma_k$-module $A$ and $n>r$. The \emph{cohomological dimension} of $k$ is less than or equal to $r$ (written $\cd(k) \leq r$), if $\cd_p(k) \leq r$ for all primes $p$. Finally, the \emph{virtual cohomological dimension} of $k$, written $\vcd(k)$ is precisely the cohomological dimension of $k(\sqrt{-1})$. If $k$ is a field of positive characteristic then $\vcd(k)= \cd(k)$.
 
Let $X$ be a scheme. For any closed point $x \in X$, let $\mathcal{O}_x$ be the local ring at $x$ and let $\mathfrak{M}_x$ be its maximal ideal. The \emph{residue field} of $x$ written $k(x)$ is $\mathcal{O}_x/\mathfrak{M}_x$. \emph{Zero cycles} of $X$ are elements of the free abelian group on closed points $x \in X$. We may associate to any zero cycle $\sum n_ix_i$ on $X$ its \emph{degree} $\sum n_i[k(x_i):k]$ where $k(x_i)$ is the residue field of $x_i$.

A closed point with residue field $k$ is called a \emph{rational point}. It is clear that if $x$ is a closed point of a variety $X$ over $k$ then it is a rational point of $X_{k(x)}$. We have seen that the point in $H^1(*, G)$ is the principal homogeneous space under $G$ over $*$ with a rational point. Therefore, a principal homogeneous space $X$ under $G$ over $k$, with zero cycle $\sum n_i x_i$ is an element of the kernel of the product of the restriction maps $H^1(k,G) \to \prod H^1(k(x_i),G)$. If the zero cycle is of degree one, then the field extensions $k(x_i)$ are necessarily of coprime degree over $k$. 

Guided by this insight, one may restate {\bf Q} as follows.

\begin{description}
\item[Q] Let $k$ be a field and let $G$ be a connected, linear algebraic group defined over $k$. Let $\{ L_i \}_{1 \leq i \leq m}$ be a collection of finite extensions of $k$ with $\text{gcd}([L_i:k])=1$. Does the canonical map
\[
H^1(k,G) \to \displaystyle \prod_{i=1}^m H^1(L_i,G)
\]
have trivial kernel?

\end{description}

\section{Orderings of a Field}

We recall some basic properties of orderings of a field \cite{Scharlau}.

An \emph{ordering} $\nu$ of a field $k$ is given by a binary relation $\leq_\nu$ such that for all $a,b,c \in k$.

\begin{itemize}
\item $a \leq_v a$
\item If $a \leq_v b$ and $b \leq_\nu c$ then $a\leq_v c$
\item If $a\leq_v b$ and $b\leq_v a$ then $a=b$
\item Either $a \leq_v b$ or $b \leq_v a$
\item If $a \leq_v b$ then $a+c \leq_v b +c$
\item If $a \leq_v b$ and $0 \leq_v c$ then $ca \leq_v cb$
\end{itemize}

A field $k$ which admits an ordering is necessarily of characteristic 0. If $k$ is a field with an ordering $v$, an \emph{algebraic extension} of the ordered field $(k,v)$ is an algebraic field extension $L$ of $k$ together with an ordering $v^\prime$ on $L$ such that $v^\prime$ restricted to $k$ is $v$. If $L$ is a finite field extension of $k$ of odd degree there is always an algebraic extension $(L,v^\prime)$ of $(k,v)$ \cite[Chapter 3, Theorem 1.10]{Scharlau}.

A field $k$ is said to be \emph{formally real} if -1 is not a sum of squares in $k$. A field $k$ is called a \emph{real closed field} if it is a formally real field and no proper algebraic extension is formally real. There is a unique ordering  $\square$ on a real closed field. This ordering is defined by the relation $a \leq b$ if and only if $b-a$ is a square in $k$. Further, if $k$ is a real closed field, then $k(\sqrt{-1})$ is algebraically closed \cite[Theorem 2.3 (iii)]{Scharlau}. 


If $L$ is a finite field extension of $k$, then $k_v \otimes L$ is isomorphic to a product of the form $\prod k_v \prod k_v(\sqrt{-1})$. Also, since $k_v(\sqrt{-1})$ is an algebraic closure for $k$ there is a natural inclusion $\Gal(\bar k, k_v) \subset \Gamma_k$ and thus a restriction map $H^1(k,G) \to H^1(k_v,G)$.

\section{Main Result}

\noindent
In the discussion which follows we will need the following lemmas.

\begin{lemma} \label{sc}
Let $k$ be a field and let $G$ be a reductive group over $k$. Fix an integer $n$ and a quasitrivial torus $T$ such that $G^n \times T$ admits a special covering
\[
1 \to \mu \to G_0 \times S \to G^n \times T \to 1
\]
Then $G^{sc}$ satisfies a Hasse Principle over $k$ if and only if $G_0$ satisfies a Hasse principle over $k$.
\end{lemma}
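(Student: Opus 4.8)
The plan is to reduce the assertion to the single structural fact that the simply connected semisimple group $G_0$ occurring in the special covering is isomorphic to $(G^{sc})^n$; granting this, the equivalence of the two Hasse principles is a formal consequence of the compatibility of Galois cohomology with finite direct products. To identify $G_0$, write $\pi\colon G_0 \times S \to G^n \times T$ for the covering isogeny and restrict it to $G_0$. Since $G_0$ is semisimple it equals its own derived group and $S$ is a torus, so the derived group of $G_0 \times S$ is $G_0$; as $\pi$ is surjective and carries derived group onto derived group, $\pi(G_0) = [G^n \times T,\, G^n \times T] = (G^{ss})^n$, the last equality holding because a torus has trivial commutator and $[G^n, G^n] = (G^{ss})^n$ (here $G$ is reductive, so $G^{ss} = [G,G]$). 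The kernel of $\pi|_{G_0}$ lies in the finite central subgroup $\mu$, hence is itself finite and central, so $\pi|_{G_0}\colon G_0 \to (G^{ss})^n$ is a central isogeny. Because $G_0$ is simply connected, uniqueness of the simply connected cover yields $G_0 \cong \bigl((G^{ss})^n\bigr)^{sc} = (G^{sc})^n$.

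With this identification the remainder is purely formal. For any field $F$ the pointed set $H^1(F, (G^{sc})^n)$ is canonically $H^1(F, G^{sc})^n$, functorially in $F$, since a torsor under a product of groups is the same datum as a tuple of torsors; this is compatible with every restriction map $H^1(k, -) \to H^1(k_v, -)$. Consequently the map $H^1(k, G_0) \to \prod_{v \in \Omega} H^1(k_v, G_0)$ is identified with the $n$-fold product of $\varphi\colon H^1(k, G^{sc}) \to \prod_{v \in \Omega} H^1(k_v, G^{sc})$. A product of $n$ copies of a single map of pointed sets is injective exactly when that map is: injectivity of $\varphi$ visibly gives injectivity of the product, and conversely restricting the product to the locus where all but one coordinate is the base point recovers $\varphi$. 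Hence $G_0$ satisfies the Hasse principle over $k$ if and only if $G^{sc}$ does.

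I expect the only genuine content to sit in the first step, namely the identification $G_0 \cong (G^{sc})^n$. The delicate point is that $G_0$ is pinned down not by any choice but by the derived-group characterization, which exhibits it as the simply connected cover of $(G^n \times T)^{ss}$; one should verify carefully that $\pi|_{G_0}$ is surjective onto $(G^{ss})^n$ with central kernel so that uniqueness of the simply connected cover applies. Once that is in place the cohomological conclusion is a direct product decomposition, and notably no arithmetic input concerning $k$ or its orderings $\Omega$ enters.
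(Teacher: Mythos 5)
Your proposal is correct and follows essentially the same route as the paper: both identify $G_0 \cong (G^{sc})^n$ by passing to derived subgroups in the special covering (using that $S$, $T$ are tori and $[G^n,G^n]=(G^{ss})^n$) and then invoking uniqueness of the simply connected cover, after which the equivalence of Hasse principles is formal. If anything, you are slightly more careful than the paper in checking that the kernel of $\pi|_{G_0}$ is central (so that the uniqueness statement genuinely applies) and in spelling out the product decomposition $H^1(F,(G^{sc})^n)\cong H^1(F,G^{sc})^n$ behind the final step.
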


\begin{proof}

Taking commutator subgroups we have a short exact sequence

\[
1 \to \tilde \mu \to [G_0 \times S: G_0 \times S] \to [G^n \times T:G^n \times T] \to 1
\]

Since $S$ and $T$ are tori, $[G_0 \times S: G_0 \times S] \cong [G_0:G_0]$ and $[G^n \times T:G^n \times T] = [G^n:G^n]$. That $G_0$ is semisimple gives $[G_0:G_0] = G_0$. It is clear that $[G^n:G^n] = [G:G]^n$ which in turn is $(G^{ss})^n$ by definition of $G^{ss}$. Therefore, we have the following short exact sequence
\[
1 \to \tilde \mu \to G_0 \to (G^{ss})^n \to 1
\]

\noindent
where $\tilde \mu$ is some finite group schme. In particular, $G_0$ is a simply connected cover of $(G^{ss})^n$. Since $(G^{sc})^n$ is certainly a simply connected cover of $(G^{ss})^n$, uniqueness of the simply connected cover of $(G^{ss})^n$ gives $(G^{sc})^n \cong G_0$. In particular, the simple factors of $G^{sc}$ are the same as the simple factors of $G_0$ and $G^{sc}$ satisfies the Hasse principle over $k$ if and only if $G_0$ satisfies the Hasse principle over $k$.
\end{proof}

\begin{lemma} \label{NormPrinciple}
Let $k$ be a real closed field and let $G$ be a reductive group over $k$ which admits a special covering 
\begin{equation}
1 \to \mu \to G_0 \times S \to G \to 1
\end{equation}

Let $L$ be a finite \`etale $k$-algebra. Let $\delta$ be the first connecting map in Galois Cohomology and let $N_{L/k}$ denote the corestriction map $H^1(k \otimes L, \mu) \to H^1(k, \mu)$. Then
\[
\xymatrix{
 N_{L/k} (\im(G(k \otimes L) \ar[r]^-{\delta_L} &H^1(k \otimes L, \mu))  \subset \im(G(k) \ar[r]^-{\delta} &H^1(k, \mu))
 }
 \]
\end{lemma}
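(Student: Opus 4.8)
The plan is to exploit the fact that over a real closed field every finite étale algebra is completely understood. Since $k$ is real closed, its only finite field extensions are $k$ itself and the algebraic closure $\bar k = k(\sqrt{-1})$, so the finite étale algebra $k \otimes L$ splits as a product $k \otimes L \cong k^a \times \bar k^{\,b}$ of copies of $k$ and $\bar k$. First I would record that Galois cohomology, the connecting map $\delta_L$, and the corestriction $N_{L/k}$ are all compatible with this product decomposition: writing $k\otimes L = \prod_j F_j$ with each $F_j \in \{k,\bar k\}$, we have $G(k\otimes L) = \prod_j G(F_j)$ and $H^1(k\otimes L,\mu) = \prod_j H^1(F_j,\mu)$, the map $\delta_L$ is the product of the connecting maps $\delta_{F_j}\colon G(F_j) \to H^1(F_j,\mu)$, and $N_{L/k}$ is the sum of the corestrictions $\cores_{F_j/k}\colon H^1(F_j,\mu) \to H^1(k,\mu)$.

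Next I would treat the two kinds of factors separately. For a factor $F_j = \bar k$, the field $\bar k$ is algebraically closed and of characteristic zero, so $\mu$ becomes a finite constant commutative group scheme over $\bar k$ and $H^1(\bar k,\mu) = H^1(\Gamma_{\bar k},\mu(\bar k)) = 0$; hence $\im \delta_{\bar k} = 0$ and such factors contribute nothing to $N_{L/k}(\im \delta_L)$. For a factor $F_j = k$, the extension $k/k$ has degree one, so $\cores_{k/k}$ is the identity and $\delta_{F_j}$ is exactly $\delta$; the contribution of such a factor therefore already lies in $\im \delta$.

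Finally I would assemble these observations. By the decomposition, an element of $N_{L/k}(\im \delta_L)$ is a sum $\sum_j \cores_{F_j/k}\bigl(\delta_{F_j}(x_j)\bigr)$ over the factors; the $\bar k$-terms vanish and each $k$-term lies in $\im \delta$. Because $\mu$ is central in $G_0 \times S$, the connecting map $\delta\colon G(k) \to H^1(k,\mu)$ is a group homomorphism, so $\im \delta$ is a subgroup of the abelian group $H^1(k,\mu)$; consequently the sum of the surviving $k$-terms again lies in $\im \delta$, which is precisely the asserted inclusion. The only points genuinely requiring care, and the ones I would write out in full, are the compatibility of corestriction with the product decomposition of $k\otimes L$ and the fact that centrality of $\mu$ makes $\im\delta$ closed under addition; the vanishing $H^1(\bar k,\mu)=0$, which is exactly what makes the real closed case so much softer than Sansuc's number field argument, does the rest.
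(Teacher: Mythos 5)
Your proposal is correct and follows essentially the same route as the paper: decompose $k \otimes L$ into copies of $k$ and $k(\sqrt{-1})$, use the vanishing of $H^1(k(\sqrt{-1}),\mu)$ to discard the algebraically closed factors, identify $N_{L/k}$ with the product (sum) map on the remaining copies of $H^1(k,\mu)$, and conclude using the multiplicativity of $\delta$ coming from the centrality of $\mu$. The only cosmetic difference is that the paper carries out the final step by explicitly writing $\delta(x_1)\cdots\delta(x_r)=\delta(x_1\cdots x_r)$ for a chosen tuple, where you instead phrase it as $\im\delta$ being a subgroup; these are the same observation.
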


\begin{proof}
Since $k$ is real closed, there exists finite numbers $r$ and $s$ such that $k \otimes L$ is isomorphic to a product of $r$ copies of $k$ and $s$ copies of  $k(\sqrt{-1})$. Thus 
 \[
H^1(k \otimes L, \mu) \cong \prod_{r \, \text{copies}} H^1(k, \mu) \prod_{s \, \text{copies}} H^1(k(\sqrt{-1}),\mu)
\]

Since $k$ is real closed, $k(\sqrt{-1})$ is algebraically closed, $H^1(k(\sqrt{-1}), \mu)$ is trivial and $H^1(k \otimes L, \mu)$ is just a product of $r$ copies of $H^1(k, \mu)$. Therefore, \[N_{L/k}:H^1(k \otimes L, \mu) \to H^1(k, \mu)\] is just the product map \[\prod_{r \, \text{copies}} H^1(k, \mu) \to H^1(k, \mu)\] That $k \otimes L$ is a product of $r$ copies of $k$ and $s$ copies of $k(\sqrt{-1})$ also gives that
\[
G(k \otimes L) \cong \prod_{r \, \text{copies}} G(k) \prod_{s \, \text{copies}} G(k(\sqrt{-1}))
\] 
Therefore, the connecting map \[\xymatrix{ \displaystyle\prod_{r \, \text{copies}} G(k) \prod_{s \, \text{copies}}G(k(\sqrt{-1})) \ar[r]^-{\delta} &\displaystyle\prod_{r \, \text{copies}} H^1(k, \mu) \prod_{s \, \text{copies}} H^1(k(\sqrt{-1}), \mu)}\] is just the product of the connecting maps \[
G(k) \to H^1(k, \mu)
\] and \[
G(k(\sqrt{-1}) \to H^1(k(\sqrt{-1}), \mu)
\]
\noindent
the latter of which is necessarily the trivial map. 

So choose \[(x_1,\ldots, x_r, y_1, \ldots, y_s ) \in G(k \otimes L)\] Then
\begin{eqnarray*}
N_{L/k}(\delta(x_1,\ldots, x_r, y_1, \ldots, y_s)) &=&N_{L/k}(\delta(x_1), \ldots, \delta(x_r), \delta(y_1), \ldots, \delta(y_s))\\
&=& \delta(x_1)\cdots \delta(x_r)\\
&=& \delta(x_1\cdots x_r)
\end{eqnarray*}

\noindent
Since the $x_i$ were chosen to be in $G(k)$ for all $i$, then $x_1\cdots x_r \in G(k)$ and the desired result holds.
\end{proof}

\begin{lemma} \label{lemma1}
 Let $G$ be a reductive group and $L$ be a finite field extension of $k$ of odd degree. The kernel of the canonical map $H^1(k,G) \to H^1(L,G)$ is contained in the kernel of the canonical map $H^1(k,G) \to \prod_{v \in \Omega} H^1(k_v,G)$.
\end{lemma}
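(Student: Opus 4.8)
The plan is to factor the restriction map to each real closure $k_v$ through the restriction map to $L$, using the odd-degree hypothesis to embed $L$ into $k_v$. Since the kernel of a map into a product is the intersection of the kernels of its components, it is enough to fix a single ordering $v \in \Omega$ and show that every $\xi \in H^1(k,G)$ with $\res(\xi) = 1$ in $H^1(L,G)$ also has trivial image in $H^1(k_v,G)$.

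The key step is to produce a $k$-embedding $L \hookrightarrow k_v$, and this is exactly where the oddness of $[L:k]$ enters. I would read it off the decomposition recorded in Section 3: $k_v \otimes_k L \cong \prod k_v \prod k_v(\sqrt{-1})$. Comparing dimensions over $k_v$ shows that the number of $k_v$-factors is congruent to $[L:k]$ modulo $2$, so the oddness of $[L:k]$ forces at least one factor isomorphic to $k_v$, i.e. a $k$-algebra homomorphism $L \to k_v$. Equivalently, by \cite[Chapter 3, Theorem 1.10]{Scharlau} the ordering $v$ extends to an ordering $v'$ of $L$, and the real closure of $(L,v')$ is a real closed algebraic extension of $k$ inducing $v$, hence $k$-isomorphic to $k_v$ by uniqueness of the real closure.

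Fixing such an embedding gives a tower $k \subseteq L \subseteq k_v$, and choosing the inclusion $k_v \hookrightarrow \bar k$ compatibly yields $\Gal(\bar k/k_v) \subseteq \Gal(\bar k/L) \subseteq \Gamma_k$. By functoriality of base change the restriction then factors as
\[
H^1(k,G) \xrightarrow{\res} H^1(L,G) \xrightarrow{\res} H^1(k_v,G),
\]
so $\res(\xi) = 1$ in $H^1(L,G)$ immediately gives $\res(\xi) = 1$ in $H^1(k_v,G)$. As $v$ was arbitrary, $\xi$ lies in the kernel of $H^1(k,G) \to \prod_{v \in \Omega} H^1(k_v,G)$, which is the claim.

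The one point requiring care is that the restriction to $k_v$ in the statement is defined through some fixed inclusion $\Gal(\bar k/k_v) \subset \Gamma_k$, whereas the factorization above uses the inclusion coming from the embedding $L \hookrightarrow k_v$; these need not coincide. However, any two real closures of $(k,v)$ inside $\bar k$ are $\Gamma_k$-conjugate, so the corresponding restriction maps differ by an isomorphism of pointed sets and in particular have the same kernel; hence membership in the kernel of $H^1(k,G) \to H^1(k_v,G)$ is independent of the chosen embedding. I expect this bookkeeping, rather than any substantial difficulty, to be the only obstacle; note in particular that reductivity of $G$ is never used here, only the functoriality of $H^1(-,G)$ under base change.
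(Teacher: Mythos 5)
Your proof is correct and follows essentially the same route as the paper: the paper likewise invokes \cite[Chapter 3, Theorem 1.10]{Scharlau} to extend each ordering $v$ of $k$ to $L$, identifies $k_v$ with the real closure $L_w$, and concludes by factoring $H^1(k,G) \to H^1(k_v,G)$ through $H^1(L,G)$. Your additional remarks (the dimension-count alternative for producing the embedding $L \hookrightarrow k_v$, and the conjugacy argument showing the kernel is independent of the chosen real closure inside $\bar k$) are sound elaborations of points the paper leaves implicit.
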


\begin{proof}
By \cite[Chapter 3, Theorem 1.10]{Scharlau} each ordering $v$ of $k$ extends to an ordering $w$ of $L$, in particular each real closure $k_v$ is $L_w$ for some ordering $w$ on $L$. Since the natural map $H^1(k,G) \to H^1(L_w,G)$ factors through the canonical map $H^1(k,G) \to H^1(L,G)$, the desired result is immediate.
\end{proof}

\noindent
We now return to the result which is the main goal of this paper. 

\begin{theorem} \label{meta}
Let $k$ be a perfect field of virtual cohomological dimension $\leq 2$ and let $G$ be a connected linear algebraic group over $k$. Let $\{ L_i\}_{1 \leq i \leq m}$ be a set of finite field extensions of k such that the greatest common divisor of the degrees of the extensions $[L_i:k]$  is 1. If $G^{sc}$ satisfies a Hasse principle over $k$, then the canonical map
\[
H^1(k,G) \rightarrow \displaystyle\prod_{i=1}^m H^1(L_i,G)
\]
has trivial kernel.
\end{theorem}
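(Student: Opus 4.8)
The plan is to reduce to a reductive group, pass to a special covering of $G^n\times T$, annihilate a degree-two obstruction using the gcd hypothesis, and then trivialize the lifted class at every ordering by a single global correction manufactured from the norm principle. First I would replace $G$ by $G^{\red}$: over a perfect field a standard d\'evissage shows $H^1(k,G)\to H^1(k,G^{\red})$ is a bijection compatible with restriction to each $L_i$, and $G^{sc}$ is unchanged, so I may assume $G$ reductive. Fix $n$ and a quasitrivial torus $T$ with a special covering $1\to\mu\to G_0\times S\to G^n\times T\to 1$. By Lemma~\ref{sc}, $G_0\cong (G^{sc})^n$ inherits the Hasse principle over $k$. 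Since $S$ and $T$ are quasitrivial their $H^1$ vanishes over every field in sight, so the covering induces a map $\pi_*\colon H^1(k,G_0)\to H^1(k,G)^n$. Given $\xi$ in the kernel, I set $\eta=(\xi,1,\dots,1)\in H^1(k,G)^n=H^1(k,G^n\times T)$; it suffices to prove $\eta=1$.

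Let $\delta_1\colon H^1(k,G^n\times T)\to H^2(k,\mu)$ be the connecting map. Because connecting maps commute with restriction and $\res_{L_i}\eta=1$, we get $\res_{L_i}\delta_1(\eta)=0$; applying $\cores_{L_i/k}$ and $\cores\circ\res=[L_i:k]$ gives $[L_i:k]\,\delta_1(\eta)=0$ in the abelian group $H^2(k,\mu)$ for every $i$, and since $\gcd([L_i:k])=1$ B\'ezout forces $\delta_1(\eta)=0$. By exactness $\eta=\pi_*(\zeta)$ for some $\zeta\in H^1(k,G_0)$. To finish I will produce $\beta\in H^1(k,\mu)$ for which the twist $\beta\cdot\zeta$ is trivial at every ordering; the Hasse principle for $G_0$ then forces $\beta\cdot\zeta=1$, so $\zeta$ lies in the $H^1(k,\mu)$-orbit of the base point, whence $\pi_*(\zeta)=1$ and $\xi=1$.

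The construction of $\beta$ is the heart of the matter. Since $\gcd([L_i:k])=1$ some $[L_j:k]$ is odd, so Lemma~\ref{lemma1} gives $\res_{k_v}\xi=1$, hence $\res_{k_v}\eta=1$, at every ordering $v$; thus $\zeta_{k_v}=\gamma_v\cdot 1$ lies in the orbit of the base point, with $\gamma_v\in H^1(k_v,\mu)$ well defined modulo $\im\delta_v$, where $\delta_v\colon (G^n\times T)(k_v)\to H^1(k_v,\mu)$. Likewise $\res_{L_i}\eta=1$ gives $\zeta_{L_i}=\beta_i\cdot 1$ with $\beta_i\in H^1(L_i,\mu)$ defined modulo $\im\delta_{L_i}$, and I set $\beta=\sum_i c_i\,\cores_{L_i/k}(\beta_i)$ with $\sum_i c_i[L_i:k]=1$. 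The main calculation is the congruence $\res_{k_v}\beta\equiv\gamma_v\pmod{\im\delta_v}$ for every $v$: one expands $\res_{k_v}\cores_{L_i/k}$ by base change as $N_{L_i/k_v}\circ\res$ over $k_v\otimes L_i$, discards the factors isomorphic to $k_v(\sqrt{-1})$ where $H^1$ vanishes, notes that on each real factor $\beta_i$ restricts to a class congruent to $\gamma_v$ modulo $\im\delta_v$, and then uses that $H^1(k_v,\mu)$ is $2$-torsion together with the fact that the number of real factors is $[L_i:k]\bmod 2$ to reduce the contribution of $L_i$ to $[L_i:k]\,\gamma_v$; summing against the $c_i$ leaves $\gamma_v$.

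The hard part, and the precise point where Lemma~\ref{NormPrinciple} is indispensable, is that $\beta_i$ is defined only modulo $\im\delta_{L_i}$, so I must know this indeterminacy cannot disturb $\res_{k_v}\beta$ modulo $\im\delta_v$. Restriction carries $\im\delta_{L_i}$ into $\im\delta_{k_v\otimes L_i}$, and the norm principle over the real closed field $k_v$ carries the latter into $\im\delta_{k_v}$; this local statement is exactly the substitute for the Chebotarev/SAP surjectivity that Sansuc invokes over number fields, and I expect it to be the real obstacle to make rigorous. Granting the congruence, at each $v$ one has $\res_{k_v}\beta+\gamma_v\equiv 2\gamma_v\equiv 0\pmod{\im\delta_v}$, so $(\beta\cdot\zeta)_{k_v}=1$; the Hasse principle for $G_0$ then yields $\beta\cdot\zeta=1$ and hence $\xi=1$, completing the proof.
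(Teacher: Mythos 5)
Your proposal is correct and follows essentially the same route as the paper: reduce to a reductive group admitting a special covering, kill the obstruction in $H^2(k,\mu)$ by restriction--corestriction together with the gcd hypothesis, lift to $H^1(k,G_0)$, build the global class $\sum_i c_i \cores_{L_i/k}(\beta_i)$, compare it at each ordering with the local lift $\gamma_v$ using Lemma \ref{NormPrinciple} to absorb the $\im \delta$ indeterminacy, and conclude by the Hasse principle for $G_0$ via Lemmas \ref{sc} and \ref{lemma1}. The only differences are cosmetic: you phrase the final step through the twisting action of $H^1(k,\mu)$ and verify the key congruence by counting real factors of $k_v \otimes L_i$ and using that $H^1(k_v,\mu)$ is $2$-torsion, whereas the paper applies $\cores \circ \res = [L_i:k]$ to the norms of the difference classes; these computations are equivalent.
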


\begin{proof}

By definition of the groups involved, the following sequence is exact 
\begin{equation} \label{short}
1 \to G^u \to G \to G^{\red} \to 1
\end{equation}

\noindent
Since $G^u$ is unipotent, $H^i(k,G^u)$ is trivial for $i \geq 1$ and \eqref{short} induces the long exact sequence in Galois Cohomology
\[
1 \to H^1(k,G) \to H^1(k, G^{\red}) \to 1
\]
which gives that $H^1(k,G) \cong H^1(k, G^{\red})$. Thus to prove \ref{meta} it is sufficient to consider the case where $G$ is a reductive group. Then fix an integer $n$ and quasitrivial torus $T$ such that $G^n \times T$ admits a special covering
\[
1 \to \mu \to G_0 \times S \to G^n \times T \to 1
\]
By functoriality, $H^1(k, G^n \times T) \cong H^1(k,G)^n \times H^1(k,T)$ and since $T$ is quasitrivial, $H^1(k,T) = 1$. It follows that our result holds for $G$ if and only if it holds for $G^n \times T$. Replacing $G$ by $G=G^n \times T$ we assume that $G$ admits a special covering
\[
1 \to \mu \to G_0 \times S \to G \to 1
\]

If $k$ is a field of positive characteristic, $\cd(d)=\vcd(k)=2$. Since $k$ has no orderings and by hypothesis $G^{sc}$ satisfies a Hasse principle over $k$ then $H^1(k,G^{sc})= \{1\}$. In particular $H^1(k,G_0) = \{1\}$ and the special covering of $G$ above induces the following commutative diagram with exact rows
\begin{equation}\xymatrix{
1 \ar[r] &H^1(k,G) \ar[r]^-h \ar[d]^-q &H^2(k, \mu) \ar[d]^-r\\
1 \ar[r] &\prod_iH^1(L_i,G) \ar[r] &\prod_iH^2(L_i, \mu)\\
}
\end{equation}
Choose $\lambda \in \ker(q)$. By commutativity of the diagram $h(\lambda) \in \ker(r)$. A restriction-corestriction argument gives $r$ has trivial kernel. Thus $h(\lambda) =$ point. Then by exactness of the top row of the diagram, $\lambda =$ point. (c.f. \cite{CTBorovoi} for the case $k$ a \lq\lq good\rq\rq \,field of cohomological dimension 2.)

Therefore, we may assume that the characteristic of $k$ is zero. Fix an index $i$. The  special covering of $G$ above induces the following commutative diagram with exact rows where the vertical maps are the restriction maps.
\begin{equation}\label{meta1}
\xymatrix{
H^1(k, \mu) \ar[r]^-f \ar[d] &H^1(k, G_0) \ar[r]^-g \ar[d]^-p &H^1(k,G) \ar[r]^-h \ar[d]^-q &H^2(k, \mu) \ar[d]^-r\\
\prod H^1(L_i, \mu) \ar[r]^-f & \prod H^1(L_i, G_0) \ar[r]^-g & \prod H^1(L_i, G) \ar[r] & \prod H^2(L_i, \mu)
}
\end{equation}

Let $\lambda$ be in $\ker(q)$. Taking $\cores \circ \res$ we find that $r$ has trivial kernel and thus by commutativity of \eqref{meta1}, $\lambda$ is in $\ker(h)$. By exactness of the top row, we choose $\lambda^\prime \in H^1(k,G_0)$ such that $g(\lambda^\prime)=\lambda$. Write $p(\lambda^\prime) = (\lambda^\prime_{L_i})$. Since $g(\lambda^\prime_{L_i}) = \point$, by exactness of the bottom row of \eqref{meta1} choose $\eta_{L_i} \in H^1(L_i, \mu)$ such that $f(\eta_{L_i})= \lambda^{\prime}_{L_i}$. 

For each ordering $v$ of $k$, the special covering of $G$ above also induces the following commutative diagram with exact rows.
\begin{equation}\label{vcd3}
\xymatrix{
H^1(k, \mu) \ar[r]^-f \ar[d] &H^1(k, G_0) \ar[r]^-g \ar[d]^-{p^{\prime}} &H^1(k,G) \ar[r]^-h \ar[d]^-{q^{\prime}} &H^2(k, \mu) \ar[d]^-{r^{\prime}}\\
 \prod_{v \in \Omega} H^1(k_v, \mu) \ar[r]^-f &\prod_{v \in \Omega} H^1(k_v, G_0) \ar[r]^-g &\prod_{v \in \Omega} H^1(k_v, G) \ar[r] &\prod_{v \in \Omega} H^2(k_v, \mu)
}
\end{equation}
By Lemma \ref{lemma1}, $\lambda$ is in the kernel of $q^\prime$. Thus by commutativity of \eqref{vcd3}, $(\lambda^\prime_v)=p^\prime(\lambda^\prime)$ is in $\ker(g)$. Then by exactness of the bottom row of \eqref{vcd3} choose $\alpha_v \in H^1(k_v, \mu)$ such that $f (\alpha_v) =\lambda^{\prime}_v$. Let $(\alpha_v)_{L_i}$ denote the image of $\alpha_v$ under the canonical map $ H^1(k_v,\mu) \to H^1(k_v  \otimes L_i, \mu)$. Let $(\eta_{L_i})_v$ denote the image of $\eta_{L_i}$ under the canonical map $H^1(L_i,\mu) \to H^1(k_v  \otimes L_i, \mu)$. 

By choice of $\alpha_v$ and $\eta_{L_i}$, $f((\alpha_v)_{L_i})= (\lambda^\prime_v)_{L_i} = (\lambda^\prime_{L_i})_v = f((\eta_{L_i})_v)$. In particular, $f((\alpha_v)_{L_i}(\eta_{L_i})_v^{-1})$ is the point in $H^1(k_v \otimes L_i, G_0)$. We have a commutative diagram
\begin{equation}\label{meta2}
\xymatrix{
G(k_v) \ar[r]^-{\delta} \ar[d] &H^1(k_v, \mu) \ar[r]^-f \ar[d]  &H^1(k_v, G_0) \ar[d]\\
\prod _i G(k_v \otimes L_i) \ar[r]^-{\delta_{L_i}} &\prod_i H^1(k_v \otimes L_i,\mu) \ar[r]^-f &\prod_i H^1(k_v \otimes L_i, G_0)
}
\end{equation}
Exactness of the bottom row of \eqref{meta2} gives that $(\alpha_v)_{L_i}(\eta_{L_i})_v^{-1}$ is in the image of $\delta_{L_i}$. Choose $m_i$ such that $\sum m_i[L_i:k] =1$. Since $\delta_{L_i}$ is multiplicative, it follows that for each index $i$, $(\alpha_v)^{m_i}_{L_i}((\eta_{L_i})_v^{-1})^{m_i}$ is in the image of $\delta_{L_i}$

By Lemma \ref{NormPrinciple} above, there exists $\gamma_v$ in $G(k_v)$ such that \[
\delta(\gamma_v) = \prod_i N_{L_i/k}((\alpha_v)^{m_i}_{L_i}((\eta_{L_i})_v^{-1})^{m_i})\] Since by restriction-corestriction $ N_{L_i/k}((\alpha_v)^{m_i}_{L_i}) = \alpha_v^{m_i[L_i:k]}$. It follows that 

\begin{eqnarray*} \delta(\gamma_v) & = &\prod_i N_{L_i/k}((\alpha_v)^{m_i}_{L_i}((\eta_{L_i})_v^{-1})^{m_i})\\
& = & \alpha_v^{\sum_i m_i[L_i:k]}\prod_i (N_{L_i/k}(\eta_{L_i})_v^{-1})^{m_i}\\ & = &\alpha_v \prod_i (N_{L_i/k}(\eta_{L_i})_v^{-1})^{m_i} 
 \end{eqnarray*}

\noindent
In turn

\[ \delta(\gamma_v) \prod_i (N_{L_i/k}(\eta_{L_i})_v)^{m_i} = \alpha_v
 \]

\noindent
Since $f$ is well-defined on the cosets of $G(k_v)$ in $H^1(k_v,\mu)$ \cite{MerkurjevNP} and the top row of \eqref{meta2} is exact, it follows that 
\[  f\left(\prod_i(N_{L_i/k}(\eta_{L_i})_v)^{m_i}\right) = f(\alpha_v) 
\]
By choice of $\alpha_v$ the latter is $\lambda^\prime_v$. Since $G^{sc}$ satisfies a Hasse principle over $k$, Lemma \ref{sc} gives that $G_0$ satisfies a Hasse principle over $k$. In particular, the map $H^1(k, G_0) \to \prod_v H^1(k_v, G_0)$ is injective, and since 
$f(\prod_i(N_{L_i/k}(\eta_{L_i})^{m_i}))_v = \lambda^\prime_v$ for all $v$, then 

\[f \left(\prod _i(N_{L_i/k}(\eta_{L_i}))^{m_i}\right) = \lambda^\prime
\]
Taking $g$ as in \eqref{meta1} above 
\[
g\left(f \left(\prod _i(N_{L_i/k}(\eta_{L_i}))^{m_i}\right)\right) = g(\lambda^\prime)
\]
Then by exactness of the top row of \eqref{meta1}, $\lambda =g(\lambda^{\prime})= $point.
\end{proof}

Applying \cite[Theorem 10.1]{BayerPariHasse} a Serre twist we obtain the following corollary:

\begin{corollary}
Let $k$ be a perfect field of virtual cohomological dimension $\leq 2$. Let $\{ L_i\}_{1 \leq i \leq m}$ be a set of finite field extensions of k such that the greatest common divisor of the degrees of the extensions $[L_i:k]$  is 1. Let $G$ be a connected linear algebraic group over $k$. If the simple factors of $G^{sc}$ are of classical type, type $F_4$ or type $G_2$ then the canonical map
\[
H^1(k,G) \rightarrow \displaystyle\prod_{i=1}^m H^1(L_i,G)
\]
is injective.
\end{corollary}

\bibliographystyle{amsplain}
\bibliography{paper2}
\end{document}